\def\qed{\hfill$\Box$}   
\numberwithin{equation}{section}
\theoremstyle{plain}
\newtheorem{thm}{Theorem}[section]
\newtheorem{lem}[thm]{Lemma} 
\newtheorem{prop} [thm] {Proposition} 
\theoremstyle{definition}
\newtheorem*{defn}{Definition}
\newtheorem*{rmk}{Remark}
\newtheorem*{ex}{Example}
\newcommand{\lamsum}[1]{\lambda_1+\cdots+\lambda_{#1}}
\newcommand{\Spr}{\mathcal{S}_{\lambda }}
\newcommand{\yy}{y}
\newcommand{\uu}{u}
\newcommand{\apfir}{r}
\newcommand{\dpara}{d}
\newcommand{\wN}{\bar w}
\newcommand{\checklambda}{\check{\lambda}}
\begin{document}
  
\title[The torus equivariant cohomology rings of Springer varieties]{The torus equivariant cohomology rings of \\ Springer varieties}
\author {Hiraku Abe} 
\address{Osaka City University Advanced Mathematical Institute (OCAMI), Sumiyoshi-ku, Osaka 558-8585, Japan}
\email{hirakuabe@globe.ocn.ne.jp}
\author {Tatsuya Horiguchi}
\address{Department of Mathematics, Osaka City University, Sumiyoshi-ku, Osaka 558-8585, Japan}
\email{d13saR0z06@ex.media.osaka-cu.ac.jp}
\date{\today}
\maketitle

\begin{abstract}
The Springer variety of type $A$ associated to a nilpotent operator on $\mathbb{C}^n$ in Jordan canonical form admits a natural action of the $\ell$-dimensional torus $T^{\ell}$ where $\ell$ is the number of the Jordan blocks. 
We give a presentation of the $T^{\ell}$-equivariant cohomology ring of the Springer variety through an explicit construction of an action of the $n$-th symmetric group on the $T^{\ell}$-equivariant cohomology group.
The $T^{\ell}$-equivariant analogue of so called Tanisaki's ideal will appear in the presentation.
\end{abstract}

\setcounter{tocdepth}{1}
\tableofcontents

\section{Introduction} \label{sect:1}
The Springer variety of type $A$ associated to a nilpotent operator $N:\mathbb{C}^n\rightarrow \mathbb{C}^n$ is a closed subvariety of the flag variety of $\mathbb{C}^n$ defined by
\[
\text{$\{V_{\bullet} \in Flags(\mathbb{C}^n)\mid NV_i\subseteq V_{i-1} \ $for all$ \ 1\leq i\leq n \}$}.
\]
When the operator $N$ is in Jordan canonical form with Jordan blocks of weakly decreasing size $\lambda=(\lambda_1,\cdots,\lambda_{\ell})$, we denote the Springer variety by $\mathcal{S}_{\lambda}$.
In 1970's, Springer constructed a representation of the $n$-th symmetric group $S_n$ on the cohomology group $H^*(\mathcal{S}_{\lambda};\mathbb{C})$, and this representation on the top degree part is the irreducible representation of type $\lambda$ (\cite{spr1}, \cite{spr2}).
Tanisaki \cite{t} used this representation to give a simple presentation of the cohomology ring $H^*(\mathcal{S}_{\lambda};\mathbb{C})$ as the quotient of the polynomial ring by an ideal, called Tanisaki's ideal. 
We remark that his argument in \cite{t} works also over $\mathbb{Z}$-coefficient.
%
Our goal in this paper is to give an explicit presentation of the $T^{\ell}$-equivariant cohomology ring $H^*_{T^{\ell}}(\mathcal{S}_{\lambda};\mathbb{Z})$ where we will explain the $\ell$-dimensional torus $T^{\ell}$ below.
In more detail, we will give a presentation as the quotient of the polynomial ring by an ideal whose generators are generalization of the generators of Tanisaki's ideal given in \cite{t}.
Through the the forgetful map $H^*_{T^{\ell}}(\mathcal{S}_{\lambda};\mathbb{Z})\rightarrow H^*(\mathcal{S}_{\lambda};\mathbb{Z})$, our presentation naturally induces the presentation of $H^*(\mathcal{S}_{\lambda};\mathbb{Z})$ given in \cite{t}.

We organize this paper as follows. In Section \ref{sect:2}, we introduce a natural action of the $\ell$-dimensional torus $T^{\ell}$ on the Springer variety $\mathcal{S}_{\lambda}$ for $\lambda=(\lambda_1,\cdots,\lambda_{\ell})$ and give the $T^{\ell}$-fixed points $\mathcal{S}_{\lambda}^{T^{\ell}}$ of the Springer variety $\mathcal{S}_{\lambda}$ where $T^{\ell}$ is defined by the following diagonal unitary matrices:
\begin{equation*} 
 \left\{ \begin{pmatrix}
  h_1E_{\lambda _1}  &    &    &     \\ 
    &  h_2E_{\lambda _2}  &    &         \\
    &    &  \ddots  &         \\
    &    &    &      h_{\ell}E_{\lambda _{\ell}}  
\end{pmatrix}
\mid  \; h_i\in\mathbb{C}, |h_i|=1 \ (1\leq i\leq \ell) \right\}.
\end{equation*}
Here, $E_i$ is the identity matrix of size $i$. 
We construct an $S_n$-action on the equivariant cohomology group $H_{T^{\ell}}^*(\mathcal{S}_{\lambda};\mathbb{Z})$ in Section \ref{sect:3} by using the localization technique which involves the equivariant cohomology of the $T^{\ell}$-fixed points. 
We state the main theorem in Section \ref{sect:4}, and prove it in Section \ref{sect:5} by using this $S_n$-action on $H_{T^{\ell}}^*(\mathcal{S}_{\lambda};\mathbb{Z})$.
Our method of the proof is the $T^{\ell}$-equivariant analogue of \cite{t}.

\medskip
\noindent
\textbf{Acknowledgements.}
The authors thank Professor Toshiyuki Tanisaki for valuable suggestions and kind teachings.

\section{Nilpotent Springer varieties and $T^\ell$-fixed points} \label{sect:2}

We begin with a definition of type $A$ nilpotent Springer varieties. We work with type $A$ in this paper and hence omit it in the following. We first recall that a flag variety $Flags(\mathbb{C}^n)$ consists of nested subspaces of $\mathbb{C}^n$: 
$$V_{\bullet}=(0 = V_0 \subset  V_1 \subset  \dots \subset  V_{n-1} \subset  V_n=\mathbb{C}^n)$$
where $\dim_{\mathbb{C}}V_i=i$ for all $i$.

\begin{defn} 
Let $N\colon \mathbb{C}^n\to \mathbb{C}^n$ be a nilpotent operator. The \textbf{(nilpotent) Springer variety} $\mathcal{S}_N$ associated to $N$ is the set of flags $V_{\bullet}$ satisfying 
 $NV_i\subseteq V_{i-1} \ $for all$ \ 1\leq i\leq n $.
\end{defn}

Since $\mathcal{S}_{gNg^{-1}}$ is homeomorphic (in fact, isomorphic as algebraic varieties) to $\mathcal{S}_N$ for any invertible matrix $g\in GL_n(\mathbb{C})$, we may assume that $N$ is a Jordan canonical form. 
In this paper, we consider the Springer variety 
\[
\text{$\Spr:=\{V_{\bullet} \in Flags(\mathbb{C}^n)\mid N_0V_i\subseteq V_{i-1} \ $for all$ \ 1\leq i\leq n \}$}
\]
where $N_0$ is in Jordan canonical form with Jordan blocks of weakly decreasing sizes $\lambda =(\lambda _1, \lambda _2, \dots, \lambda _{\ell})$.

Let $T^n$ be a $n$-dimensional torus consisting of diagonal unitary matrices:

\begin{equation} \label{eq:T^n}
T^n=\left\{ \begin{pmatrix}
  g_1  &    &    &     \\ 
    &  g_2  &    &         \\
    &    &  \ddots  &         \\
    &    &    &      g_n  
\end{pmatrix} \mid  \; g_i\in\mathbb{C}, |g_i|=1 \ (1\leq i\leq n) \right\}.
\end{equation}
Then the $n$-dimensional torus $T^n$ naturally acts on the flag variety $Flags(\mathbb{C}^n)$, but $T^n$ does not preserve the Springer variety $\Spr$ in general. So we introduce the following $\ell$-dimensional torus:
\begin{equation} \label{eq:T^l}
T^\ell=\left\{ \begin{pmatrix}
  h_1E_{\lambda _1}  &    &    &     \\ 
    &  h_2E_{\lambda _2}  &    &         \\
    &    &  \ddots  &         \\
    &    &    &      h_{\ell}E_{\lambda _{\ell}}  
\end{pmatrix}\in T^n \mid  \; h_i\in\mathbb{C}, |h_i|=1 \ (1\leq i\leq \ell) \right\}
\end{equation}
where $E_i$ is the identity matrix of size $i$ and $\lambda =(\lambda _1, \lambda _2, \dots, \lambda _{\ell})$. 
Then the torus $T^{\ell}$ preserves the Springer variety $\mathcal{S}_{\lambda}$.
Our goal in this section is to give the $T^\ell$-fixed point set $\Spr^{T^\ell}$.

The $T^n$-fixed point set $Flags(\mathbb{C}^n)^{T^n}$ of the flag variety $Flags(\mathbb{C}^n)$ is given by 
$$\{(\langle e_{w(1)}\rangle \subset \langle e_{w(1)},e_{w(2)}\rangle \subset \dots \subset \langle e_{w(1)},e_{w(2)},\dots ,e_{w(n)}\rangle =\mathbb{C}^n) \mid w\in S_n\}$$
where $e_1,e_2,\dots ,e_n$ is the standard basis of $\mathbb{C}^n$ and $S_n$ is the symmetric group on $n$ letters $\{1,2,\dots ,n \}$, so we may identify $Flags(\mathbb{C}^n)^{T^n}$ with $S_n$.

Let $w$ be an element of $S_n$ satisfying the following property:
\begin{align} \label{eq:2.2}
&\text{for each $1\leq k\leq\ell$, the numbers between $\lamsum{k-1}+1$ and $\lamsum{k}$} \\ \notag
&\text{appear in the one-line notation of $w$ as a subsequence in the increasing order.}
\end{align}
Here, we write $\lamsum{k-1}+1=1$ when $k=1$.



%
%

%
%
\begin{ex}
We consider the case $n=6$, $\ell=3$, and $\lambda =(3,2,1)$. Using one-line notation, the following permutations 
$$w_1=124365, \ \ w_2=416253, \ \ w_3=612435$$
satisfy the condition \eqref{eq:2.2}. In fact, the sequences $(1,2,3)$, $(4,5)$ and $(6)$ appear in the one-line notations as a subsequence in the increasing order.  
\end{ex}

\begin{lem}\label{lem2.1}
The $T^\ell$-fixed points $\Spr^{T^\ell}$ of the Springer variety $\Spr$ is the set 
\begin{equation*}
\{w\in S_n \mid w \textrm{ satisfy the condition } \emph(2.3) \}.
\end{equation*}
\end{lem}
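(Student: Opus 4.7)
The plan is to establish the two inclusions separately. For the easy direction, suppose $w \in S_n$ satisfies condition (2.3), and let $V_\bullet^w$ denote the associated coordinate flag $(\langle e_{w(1)}\rangle \subset \cdots \subset \langle e_{w(1)},\ldots,e_{w(n)}\rangle)$. Recalling that $N_0 e_j$ equals $e_{j-1}$ when $j$ is not the first index of a Jordan block and equals $0$ otherwise, condition (2.3) immediately yields $N_0 V_i^w \subseteq V_{i-1}^w$ for all $i$. Thus $V_\bullet^w \in \mathcal{S}_\lambda$, and being a coordinate flag it is $T^n$-fixed, hence $T^\ell$-fixed.

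For the converse, let $V_\bullet \in \mathcal{S}_\lambda^{T^\ell}$ and set $W_k = \langle e_j : \lambda_1 + \cdots + \lambda_{k-1} < j \leq \lambda_1 + \cdots + \lambda_k\rangle$, which is the weight subspace on which $T^\ell$ acts through its $k$-th character $h_k$. Since the characters $h_1,\ldots,h_\ell$ of $T^\ell$ acting on $\mathbb{C}^n$ are pairwise distinct, every $T^\ell$-invariant subspace of $\mathbb{C}^n$ is the direct sum of its intersections with the $W_k$; in particular $V_i = \bigoplus_{k=1}^{\ell} (V_i \cap W_k)$ for every $0 \leq i \leq n$.

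Next I would invoke the Springer condition. Since $N_0$ preserves each $W_k$, and since $N_0(V_i \cap W_k) \subseteq V_{i-1} \cap W_k \subseteq V_i \cap W_k$, each intersection $V_i \cap W_k$ is an $N_0$-invariant subspace of $W_k$. But $N_0|_{W_k}$ is a single nilpotent Jordan block on $W_k$, so its only invariant subspaces are $\langle e_{a_k}, e_{a_k+1},\ldots, e_{a_k+j-1}\rangle$ with $a_k = \lambda_1+\cdots+\lambda_{k-1}+1$ and $0 \leq j \leq \lambda_k$. Therefore each $V_i$ is spanned by a subset of the standard basis, so $V_\bullet = V_\bullet^w$ for a unique $w \in S_n$. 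Imposing $N_0 V_i \subseteq V_{i-1}$ for all $i$ then amounts precisely to saying that within each block $\{a_k, a_k+1,\ldots,a_k+\lambda_k-1\}$ the indices appear in the one-line notation of $w$ in increasing order, which is exactly condition (2.3).

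The main obstacle is the weight space decomposition step, namely the assertion that any $T^\ell$-invariant subspace of $\mathbb{C}^n$ is the direct sum of its intersections with the weight spaces $W_k$. This is a standard consequence of the distinctness of the characters of $T^\ell$ on $\mathbb{C}^n$, but it is the pivotal structural input: once it is in hand, the Springer condition rigidly forces each slice $V_i \cap W_k$ to be a coordinate subspace, and the remainder of the argument is a bookkeeping translation into the combinatorial condition (2.3).
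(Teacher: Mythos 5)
Your proof is correct, and it takes a genuinely different route from the paper. The paper works entirely in coordinates: starting from a generating vector $v_1$ of $V_1$, it applies $N_0 V_1 = 0$ to force a specific zero pattern in $v_1$, then uses the $T^\ell$-fixedness equation $h\cdot v_1 = v_1$ to kill all but one of the remaining coordinates, and iterates this hands-on argument for $v_2, v_3, \ldots$ to conclude that $V_\bullet$ is a coordinate flag. You instead make two structural observations: first, that $T^\ell$-invariance of each $V_i$ forces the weight-space decomposition $V_i = \bigoplus_k (V_i \cap W_k)$ because the characters $h_1,\ldots,h_\ell$ are distinct; second, that the Springer condition together with $N_0 W_k \subseteq W_k$ makes each $V_i \cap W_k$ an $N_0$-invariant subspace of the single Jordan block $W_k$, which pins it down to one of the $\lambda_k + 1$ nested coordinate subspaces. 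This bypasses the coordinate bookkeeping entirely. What your version buys is conceptual clarity and a cleaner logical skeleton (weight decomposition + classification of invariant subspaces of a single Jordan block); what the paper's version buys is self-containedness, as it never invokes either of those representation-theoretic facts and works from scratch. One small remark: once you have pinned down $V_i \cap W_k$ to the nested coordinate subspaces of $W_k$ and observed that passing from $V_{i-1}$ to $V_i$ increments exactly one block by one dimension, condition (2.3) already falls out without having to re-impose $N_0 V_i \subseteq V_{i-1}$ a second time; but invoking it again as you do is harmless.
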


\begin{proof}
Let $w=V_\bullet$ a permutation satisfying the condition \eqref{eq:2.2}.
Since $w(1)$ is equal to one of the numbers $1$, $\lambda_1+1$, $\lambda_1+\lambda_2+1$, \dots, $\lamsum{\ell-1}+1$, we have $N_0V_1\subseteq \{0\}$. 
If $w(1)=\lamsum{k-1}+1$, then $w(2)$ is equal to one of the numbers $1$ ,$\lambda _1+1$, $\dots,\lamsum{k-1}+2$, \dots, $\lamsum{\ell-1}+1$. So we also have $N_0V_2\subseteq V_1$. Continuing  this argument, we have $N_0V_i\subseteq V_{i-1}$ for all $1\leq i\leq n$, and it follows that the $w$ is an element of $\Spr$. On the other hand, the $w$ is clearly fixed by $T^\ell$, so the $w$ is an element of $\Spr^{T^\ell}$.

Conversely, let $V_\bullet$ be an element of $\Spr^{T^\ell}$. Let $v_1$,$v_2$,$\dots$,$v_j$ be generators for $V_j$ where $v_j=(x^{(j)}_{1},x^{(j)}_{2},\cdots,x^{(j)}_{n})^t$ in $\mathbb{C}^n$ for all $j$.
Since we have 
\begin{align*} 
&N_0v_1=(\underbrace {x^{(1)}_{2},\cdots,x^{(1)}_{\lambda _1},0}_{\lambda_1},
       \underbrace {x^{(1)}_{\lambda _1+2 },\cdots,x^{(1)}_{\lambda _1+\lambda _2},0}_{\lambda_2},
       \cdots\cdots,
       \underbrace {x^{(1)}_{\lamsum{\ell-1}+2 },\cdots,x^{(1)}_{n},0}_{\lambda_{\ell}})^t, 
\end{align*} 
the condition $N_0V_1\subseteq V_0=\{0 \}$ implies that
\begin{equation} \label{eq:2.3}
v_1=(\underbrace {x^{(1)}_{1},0,\cdots,0}_{\lambda_1},
     \underbrace {x^{(1)}_{\lambda _1+1 },0,\cdots,0}_{\lambda_2},
     \cdots\cdots,
     \underbrace {x^{(1)}_{\lamsum{\ell-1}+1 },0,\cdots,0}_{\lambda_{\ell}})^t.
\end{equation}
It follows that exactly one of $x^{(1)}_{i}$\ ($i=1,\lambda _1+1,\lambda _1+\lambda _2+1,\dots,\lamsum{\ell-1}+1$) which appear in \eqref{eq:2.3} is nonzero. In fact, $V_\bullet$ is fixed by the $T^{\ell}$-action and hence we have $h\cdot v_1=v_1$ for arbitrary $h\in T^\ell$ where
\begin{equation*} 
h\cdot v_1=(\underbrace {h_1x^{(1)}_{1},0,\cdots,0}_{\lambda_1},
            \underbrace {h_2x^{(1)}_{\lambda _1+1 },0,\cdots,0}_{\lambda_2},
            \cdots\cdots,
            \underbrace {h_{\ell}x^{(1)}_{\lamsum{\ell-1}+1 },0,\cdots,0}_{\lambda_{\ell}})^t.
\end{equation*}
Since each $h_i$ runs over all complex numbers whose absolute values are 1, only one of $x^{(1)}_{i}$ in \eqref{eq:2.3} must be nonzero.

If $x^{(1)}_{\lamsum{k-1}+1 }$ is nonzero for some $1\leq k\leq \ell$, then we may assume that
\begin{align*} 
&v_1=(0,\cdots,0,1,0,\cdots,0)^t, \\
&v_j=(x^{(j)}_{1},\cdots,x^{(j)}_{\lamsum{k-1} },0,x^{(j)}_{\lamsum{k-1}+2 },\cdots,x^{(j)}_{n})^t
\end{align*} 
for $2\leq j\leq n$ where the ($\lamsum{k-1}+1$)-th component of $v_1$ is one.
Since we have 
\begin{align*} 
&N_0v_2=(\underbrace {x^{(2)}_{2},\cdots,x^{(2)}_{\lambda _1},0}_{\lambda_1},
       \underbrace {x^{(2)}_{\lambda _1+2 },\cdots,x^{(2)}_{\lambda _1+\lambda _2},0}_{\lambda_2},
       \cdots\cdots,
       \underbrace {x^{(2)}_{\lamsum{\ell-1}+2 },\cdots,x^{(2)}_{n},0}_{\lambda_{\ell}})^t, 
\end{align*} 
the condition $N_0V_2\subseteq V_1$ implies that
\begin{equation} \label{eq:2.4}
v_2=(\underbrace {x^{(2)}_{1},0,\cdots,0}_{\lambda_1},
     \cdots\cdots,
     \underbrace {0,x^{(2)}_{\lamsum{k-1}+2 },0,\cdots,0}_{\lambda_k},
     \cdots\cdots,
     \underbrace {x^{(2)}_{\lamsum{\ell-1}+1 },0,\cdots,0}_{\lambda_{\ell}})^t.
\end{equation}
Therefore, we see that the only one of $x^{(2)}_{i}$\ ($i=1,\lambda _1+1,\dots,\lamsum{k-1}+2,\dots,\lamsum{\ell-1}+1$) which appear in \eqref{eq:2.4} is nonzero by an argument similar to that used above. Continuing this procedure, we conclude that 
$V_\bullet =w$
for some $w\in S_n$ satisfying the condition \eqref{eq:2.2}.
In fact, $w(1)$ is equal to one of the numbers $1$, $\lambda_1+1$, $\lambda_1+\lambda_2+1$, \dots, $\lamsum{\ell-1}+1$. If $w(1)=\lamsum{k-1}+1$, then $w(2)$ is equal to one of the numbers $1$ ,$\lambda _1+1$, $\dots,\lamsum{k-1}+2$, \dots, $\lamsum{\ell-1}+1$ and so on. This means that for each $k=1,\dots,\ell$ the numbers between $\lamsum{k-1}+1$ and $\lamsum{k}$ appear in the one-line notation of $w$ as a subsequence in the increasing order.
\end{proof}

Regarding a product of symmetric groups $S_{\lambda _1}\times S_{\lambda _2}\times \cdots \times S_{\lambda _{\ell}}$ as a subgroup of the symmetric group $S_n$, it follows from Lemma~\ref{lem2.1} that the $T^\ell$-fixed points $\Spr^{T^\ell}$ of the Springer variety $\Spr$ is identified with the right cosets $S_{\lambda _1}\times S_{\lambda _2}\times \cdots \times S_{\lambda _{\ell}}\backslash S_n$ where  each $w\in\Spr^{T^\ell}$ corresponds to the right coset $[w]$. 
In fact, the condition \eqref{eq:2.2} provides a unique representative for each right coset.

\section{An action of the symmetric group $S_n$ on $H^{\ast}_{T^\ell}(\Spr)$} \label{sect:3}

In this section, we introduce an action of the symmetric group $S_n$ on the equivariant cohomology group $H^{\ast}_{T^\ell}(\Spr)$ over $\mathbb{Z}$-coefficient by using the localization technique.
We will see that the projection map
$$\rho_{\lambda}: H^{\ast}_{T^n}(Flags(\mathbb{C}^n))\rightarrow H^{\ast}_{T^{\ell}}(\Spr)$$
induced from the inclusions of $\Spr$ into $Flags(\mathbb{C}^n)$ and $T^{\ell}$ into $T^n$ is $S_n$-equivariant map.
In particular, we consider the following commutative diagram
\begin{equation}\label{eq:3.3}
\begin{CD}
H^{\ast}_{T^n}(Flags(\mathbb{C}^n))@>{\iota _1}>> H^{\ast}_{T^n}(Flags(\mathbb{C}^n)^{T^n})=\displaystyle \bigoplus_{w\in S_n} \mathbb{Z}[t_1,\dots,t_n]\\
@V{\rho_{\lambda}}VV @V{\pi }VV\\
H^{\ast}_{T^{\ell}}(\Spr)@>{\iota _2}>> H^{\ast}_{T^{\ell}}(\Spr^{T^{\ell}})=\displaystyle \bigoplus_{w\in \Spr^{T^\ell}\subseteq S_n} \mathbb{Z}[u_1,\dots,u_{\ell}]
\end{CD}
\end{equation}
where all the maps are induced from inclusion maps, and construct $S_n$-actions on the three modules $H^{\ast}_{T^n}(Flags(\mathbb{C}^n))$, $\bigoplus_{w\in S_n} \mathbb{Z}[t_1,\dots,t_n]$, and $\bigoplus_{w\in \Spr^{T^\ell}} \mathbb{Z}[u_1,\dots,u_{\ell}]$ to construct an $S_n$-action on $H^{\ast}_{T^{\ell}}(\Spr)$.
All (equivariant) cohomology rings are assumed to be over $\mathbb{Z}$-coefficient unless otherwise specified. 

First, we introduce the left action of the symmetric group $S_n$ on the cohomology group $H^*(Flags(\mathbb{C}^n))$. To do that, we consider the right $S_n$-action on the flag variety $Flags(\mathbb{C}^n)$ as follows.

For any $V_\bullet \in Flags(\mathbb{C}^n)$, there exists $g\in U(n)$ so that $V_i=\bigoplus_{j=1}^{i}\mathbb{C}g(e_j)$, where $\{e_1,\dots,e_n \}$ is the standard basis of $\mathbb{C}^n$. Then the right action of $w\in S_n$ on $Flags(\mathbb{C}^n)$ can be defined by
\begin{equation} \label{eq:3.1}
V_\bullet\cdot w=V'_\bullet
\end{equation}
where $V'_i=\bigoplus_{j=1}^{i}\mathbb{C}g(e_{w(j)})$.

We recall an explicit presentation of the $T^n$-equivariant cohomology ring of the flag variety $Flags(\mathbb{C}^n)$. 
Let $E_i$ be the subbundle of the trivial vector bundle $Flags(\mathbb{C}^n)\times \mathbb{C}^n$ over $Flags(\mathbb{C}^n)$ whose fiber at a flag $V_{\bullet}$ is just $V_i$. We denote the $T^n$-equivariant first Chern class of the line bundle $E_i/E_{i-1}$ by $\bar x_i\in H^2_{T^n}(Flags(\mathbb{C}^n))$. 
Let $\mathbb{C}_i$ be the one dimensional representation of $T^n$ through a map $T^n\rightarrow S^1$ given by $diag(g_1,\dots,g_n)\mapsto g_i$. We denote the first Chern class of the line bundle $ET^n\times _{T^n}\mathbb{C}_i$ over $BT^n$ by $t_i\in H^2(BT^n)$. Since $t_1,\dots,t_n$ generate $H^*(BT^n)$ as a ring and they are algebraically independent, we identify $H^*(BT^n)$ with a polynomial ring $\mathbb{Z}[t_1,\dots,t_n]$.
Then $H^{\ast}_{T^n}(Flags(\mathbb{C}^n))$ is generated by $\bar x_1,\dots ,\bar x_n,t_1,\dots ,t_n$ as a ring. Defining a surjective ring homomorphism from the polynomial ring $\mathbb{Z}[x_1,\dots ,x_n,t_1,\dots,t_n]$ to $H^{\ast}_{T^n}(Flags(\mathbb{C}^n))$ by sending $x_i$ to $\bar x_i$ and $t_i$ to $t_i$, its kernel $\tilde I$ is generated as an ideal by $e_i(x_1,\dots ,x_n)-e_i(t_1,\dots ,t_n)$ for all $1\leq i\leq n$, where $e_i$ is the $i$-th elementary symmetric polynomial. Thus, we have an isomorphism
\begin{equation}\label{eq:3.1.5}
H^{\ast}_{T^n}(Flags(\mathbb{C}^n))\cong \mathbb{Z}[x_1,\dots,x_n,t_1,\dots,t_n]/\tilde I.
\end{equation}

The right action in \eqref{eq:3.1} induces the following left action of the symmetric group $S_n$ on $H^{\ast}_{T^n}(Flags(\mathbb{C}^n))$:
\begin{equation}\label{eq:3.2}
w\cdot \bar x_i=\bar x_{w(i)}, \ w\cdot t_i=t_i
\end{equation}
for $w\in S_n$.
In fact, the pullback of the line bundle $E_i/E_{i-1}$ under the right action in \eqref{eq:3.1} is exactly the line bundle $E_{w(i)}/E_{w(i)-1}$, and the right action in \eqref{eq:3.1} is $T^n$-equivariant.

Second, we define a left action of $v\in S_n$ on the direct sum $\bigoplus_{w\in S_n} \mathbb{Z}[t_1,\dots,t_n]$ of the polynomial ring as follows:
\begin{equation}\label{eq:3.8}
(v\cdot f)|_w=f|_{wv}
\end{equation}
where $w\in S_n$ and $f\in \bigoplus_{w\in S_n} \mathbb{Z}[t_1,\dots,t_n]$. 
Observe that the map $\iota _1$ in \eqref{eq:3.3} is the following mapping
\begin{align} 
&\iota _1(\bar x_i)|_w=t_{w(i)}, \ \iota _1(t_i)|_w=t_{i}.\label{eq:3.5} 
\end{align} 
Note that it follows from \eqref{eq:3.2}, \eqref{eq:3.8}, and \eqref{eq:3.5} that the map $\iota _1$ is $S_n$-equivariant map, i.e. $w\cdot (\iota _1(f))=\iota _1(w\cdot f)$ for any $f\in H^{\ast}_{T^n}(Flags(\mathbb{C}^n))$ and $w\in S_n$. 

To construct an $S_n$-action on $\bigoplus_{w\in \Spr^{T^\ell}} \mathbb{Z}[u_1,\dots,u_{\ell}]$ 
, we need some preparations. 
 We identify $H^*(BT^{\ell})$ with a polynomial ring with $\ell$ variables. That is,  
$$H^{\ast}(BT^{\ell})=\mathbb{Z}[u_1,\dots,u_{\ell}]$$
where $u_i\in H^2(BT^{\ell})$ is the first Chern class of the line bundle $ET^{\ell}\times _{T^{\ell}}\mathbb{C}_i$ over $BT^{\ell}$. Here, $\mathbb{C}_i$ is the one dimensional representation of $T^{\ell}$ through a map $T^{\ell}\rightarrow S^1$ given by 
diag$(h_1,\cdots,h_1,h_2,\cdots,h_2,\cdots\cdots,h_{\ell},\cdots,h_{\ell}) \mapsto h_i$.

It is known that $Flags(\mathbb{C}^n)$ and $\Spr$ admit a cellular decomposition (\cite{spa}), so the odd degree cohomology groups of $Flags(\mathbb{C}^n)$ and $\Spr$ vanish. The path-connectedness of $Flags(\mathbb{C}^n)$ and $\Spr$ together with this fact implies that the maps $\iota _1$ and $\iota _2$ in \eqref{eq:3.3} are injective (cf.\cite{m-t}) and that the map $\rho_{\lambda}$ in \eqref{eq:3.3} is surjective (cf.\cite{h-s}). The map $\pi $ in \eqref{eq:3.3} is clearly surjective.
Therefore, we obtain the following lemma. Let $\bar y_i$ be the image $\rho_{\lambda}(\bar x_i)$ of $\bar x_i$ for each $i$.


\begin{lem}\label{lem3.1} 
The $T^{\ell}$-equivariant cohomology ring $H^{\ast}_{T^{\ell}}(\Spr)$ is generated by $\bar y_1$,$\dots$,$\bar y_n$,\\
$u_1$,$\dots$,$u_{\ell}$ as a ring where $\bar y_i$ is as above and $H^{\ast}(BT^{\ell})=\mathbb{Z}[u_1,\dots,u_{\ell}]$. \qed
\end{lem}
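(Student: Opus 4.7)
The plan is to leverage the surjectivity of $\rho_\lambda$, which was established in the paragraph just before the lemma via the cell decomposition of $Flags(\mathbb{C}^n)$ and $\Spr$ (the odd-degree cohomology vanishes, so equivariant-to-ordinary spectral sequence collapses and the standard argument of \cite{h-s} applies). Once surjectivity of $\rho_\lambda$ is granted, any ring-generating set of $H^{\ast}_{T^n}(Flags(\mathbb{C}^n))$ maps to a ring-generating set of $H^{\ast}_{T^{\ell}}(\Spr)$. By the presentation \eqref{eq:3.1.5}, the upstairs ring is generated by $\bar x_1,\dots,\bar x_n, t_1,\dots,t_n$, so the downstairs ring is generated by $\rho_\lambda(\bar x_1),\dots,\rho_\lambda(\bar x_n),\rho_\lambda(t_1),\dots,\rho_\lambda(t_n)$.

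By definition $\rho_\lambda(\bar x_i)=\bar y_i$, so the only remaining point is to identify each $\rho_\lambda(t_j)$ with one of the $u_k$'s. For this I would factor $\rho_\lambda$ through $H^{\ast}_{T^{\ell}}(Flags(\mathbb{C}^n))$, using that the classes $t_j$ are pulled back from $H^{\ast}(BT^n)$; restriction along $T^\ell\hookrightarrow T^n$ therefore produces their images in $H^{\ast}(BT^\ell)$ before these are in turn pushed into $H^{\ast}_{T^{\ell}}(\Spr)$. The block-diagonal embedding \eqref{eq:T^l} restricts the character $\operatorname{diag}(g_1,\dots,g_n)\mapsto g_j$ of $T^n$ to the character $(h_1,\dots,h_\ell)\mapsto h_k$ of $T^\ell$, where $k$ is the unique index with $\lamsum{k-1}<j\leq\lamsum{k}$. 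Under the induced map $BT^\ell\to BT^n$, the first Chern class $t_j$ therefore pulls back to $u_k$, whence $\rho_\lambda(t_j)=u_k$ inside $H^{\ast}_{T^{\ell}}(\Spr)$.

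Combining the two steps shows that $\{\bar y_1,\dots,\bar y_n,u_1,\dots,u_\ell\}$ is a ring-generating set, as claimed. There is no serious obstacle in this argument: the nontrivial analytic input—surjectivity of $\rho_\lambda$—has already been invoked above, and what remains is purely bookkeeping, tracing how the defining characters of $T^n$ restrict along the block-diagonal embedding of $T^\ell$ in \eqref{eq:T^l}.
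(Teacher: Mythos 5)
Your argument is correct and matches what the paper implicitly intends (the lemma carries a bare \qed because the proof is meant to follow immediately from the surjectivity of $\rho_\lambda$ and the presentation \eqref{eq:3.1.5}). Your identification of $\rho_\lambda(t_j)$ with $u_{\phi(j)}$ via restriction of characters along the block-diagonal embedding is exactly the right bookkeeping; the paper records the same fact later as part of \eqref{eq:3.6}--\eqref{eq:3.7}.
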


Let $\phi $ : $[n]\rightarrow [\ell]$ ($[n]:=\{1,2,\dots,n \}$) be a map defined by 
\begin{equation}\label{eq:3.4}
\phi(i)=k
\end{equation}
if $\lamsum{k-1}+1\leq i\leq \lamsum{k}$ where $\lamsum{k-1}=0$ when $k=1$.
Observe that the map $\pi $ in \eqref{eq:3.3} is the following mapping
\begin{align} 
&\pi (f|_{w}(t_1,\dots,t_n))=f|_{w}(u_{\phi(1)},\dots,u_{\phi(n)}),\label{eq:3.6}
\end{align} 
where $f|_{w}$ denotes $w$-component of $f$.
It follows from \eqref{eq:3.5}, \eqref{eq:3.6} and the commutative diagram in \eqref{eq:3.3} that
\begin{equation}\label{eq:3.7}
\iota _2(\bar y_i)|_{w}=u_{\phi (w(i))} \ \textrm{and} \ \iota _2(u_i)|_{w}=u_{i}.
\end{equation}

Third, we define the left action of $v\in S_n$ on the direct sum $\bigoplus_{w\in \Spr^{T^\ell}} \mathbb{Z}[u_1,\dots,u_{\ell}]$ of the polynomial ring as follows:
\begin{equation}\label{eq:3.9}
(v\cdot f)|_{w}=f|_{w'}
\end{equation}
 for $w\in \Spr^{T^\ell}$ and $f\in \bigoplus_{w\in \Spr^{T^\ell}} \mathbb{Z}[u_1,\dots,u_{\ell}]$ where $w'$ is the element of $\Spr^{T^\ell}$ whose right coset agrees with the right coset $[wv]$ of $S_{\lambda _1}\times S_{\lambda _2}\times \cdots \times S_{\lambda _{\ell}}\backslash S_n$. 
Note that the map $\pi $ in \eqref{eq:3.3} is not $S_n$-equivariant in general. 

\begin{lem}\label{lem3.2} 
For any $v\in S_n$ and $1\leq i\leq n$, it follows that
\begin{equation} \label{eq:3.10}
v\cdot (\iota _2(\bar y_i))=\iota _2(\bar y_{v(i)}) \ \textrm{and} \ v\cdot (\iota _2(u_i))=\iota _2(u_i) 
\end{equation} 
where the map $\iota _2$ is in \eqref{eq:3.3} and $\bar y_i$ is the image of $\bar x_i$ under the map $\rho_{\lambda}$ in \eqref{eq:3.3}. 
\end{lem}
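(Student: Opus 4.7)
The plan is to verify both identities in (3.10) componentwise, i.e., by checking equality at each $T^\ell$-fixed point $w \in \Spr^{T^\ell}$ via the injection $\iota_2$. Since $\iota_2$ sends the relevant classes into the concrete direct sum $\bigoplus_{w \in \Spr^{T^\ell}} \mathbb{Z}[u_1,\dots,u_\ell]$ where the $S_n$-action is given by the explicit formula \eqref{eq:3.9}, this reduces the claim to a combinatorial identity about $\phi$ and right cosets modulo $S_\lambda := S_{\lambda_1} \times \cdots \times S_{\lambda_\ell}$.

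For the first identity, fix $w \in \Spr^{T^\ell}$ and let $w' \in \Spr^{T^\ell}$ denote the unique representative of the right coset $[wv]$. Using \eqref{eq:3.9} and then \eqref{eq:3.7}, the left-hand side localizes as
\[
(v \cdot \iota_2(\bar y_i))|_w \;=\; \iota_2(\bar y_i)|_{w'} \;=\; u_{\phi(w'(i))},
\]
while the right-hand side localizes as
\[
\iota_2(\bar y_{v(i)})|_w \;=\; u_{\phi(w(v(i)))} \;=\; u_{\phi((wv)(i))}.
\]
Thus everything comes down to verifying the equality $\phi(w'(i)) = \phi((wv)(i))$.

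To establish this, I would use that $w'$ and $wv$ lie in the same right coset of $S_\lambda$, so $w' = \sigma \cdot wv$ for some $\sigma \in S_\lambda$. The key observation is that, by definition \eqref{eq:3.4}, the fibers of $\phi \colon [n] \to [\ell]$ are precisely the blocks $\{\lamsum{k-1}+1, \dots, \lamsum{k}\}$ that are preserved setwise by $S_\lambda$. Hence $\phi \circ \sigma = \phi$, which yields $\phi(w'(i)) = \phi(\sigma((wv)(i))) = \phi((wv)(i))$, as required.

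For the second identity, the same localization strategy trivializes: by \eqref{eq:3.9} and \eqref{eq:3.7},
\[
(v \cdot \iota_2(u_i))|_w \;=\; \iota_2(u_i)|_{w'} \;=\; u_i \;=\; \iota_2(u_i)|_w,
\]
and this holds for every $w \in \Spr^{T^\ell}$. There is no genuine obstacle here; the only thing to be attentive about is the convention for composing permutations so that the right coset $[wv]$ is indeed $S_\lambda \cdot wv$, which is precisely what makes the relation $w' = \sigma \cdot wv$ available for some $\sigma \in S_\lambda$.
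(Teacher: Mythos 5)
Your proposal is correct and follows essentially the same route as the paper: both reduce each identity to a componentwise check under $\iota_2$ using \eqref{eq:3.7} and \eqref{eq:3.9}, and the first identity comes down to $\phi(w'(i))=\phi(wv(i))$. The paper states directly that $w'(i)$ and $wv(i)$ lie in the same block $\{\lamsum{r-1}+1,\dots,\lamsum{r}\}$, whereas you unpack this a step further by writing $w' = \sigma\, wv$ with $\sigma \in S_{\lambda_1}\times\cdots\times S_{\lambda_\ell}$ and observing $\phi\circ\sigma = \phi$ — the same observation, phrased more explicitly.
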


\begin{proof}
From \eqref{eq:3.7} and \eqref{eq:3.9}, we have 
$$(v\cdot (\iota _2(u_i)))|_w=\iota _2(u_i)|_{w'}=u_i=\iota _2(u_i)|_w$$
for all $w\in S_n$. So the second equation holds.
From \eqref{eq:3.7} and \eqref{eq:3.9} again, we have 
\begin{align*} 
&(v\cdot (\iota _2(\bar y_i)))|_w=\iota _2(\bar y_i)|_{w'}=u_{\phi (w'(i))}, \\ 
&\iota _2(\bar y_{v(i)})|_w=u_{\phi (w(v(i)))}.
\end{align*} 
Therefore, it is enough to prove $\phi (w'(i))=\phi (wv(i))$.
Since $[w']=[wv]$ in $S_{\lambda _1}\times S_{\lambda _2}\times \cdots \times S_{\lambda _{\ell}}\backslash S_n$, we have 
\begin{align*} 
&\lamsum{r-1}+1\leq w'(i)\leq \lamsum{r}, \\
&\lamsum{r-1}+1\leq wv(i)\leq \lamsum{r}
\end{align*} 
for some $r$. 
From the definition \eqref{eq:3.4} of the map $\phi $, we have $\phi (w'(i))=\phi (wv(i))$, and the first equation holds. We are done.
\end{proof}

Since the map $\iota _2$ is injective, we obtain an $S_n$-action on $H^{\ast}_{T^{\ell}}(\Spr)$ satisfying 
\begin{equation} \label{eq:3.11}
w\cdot \bar y_i=\bar y_{w(i)} \ \textrm{and} \ w\cdot u_i=u_i
\end{equation} 
for $w\in S_n$ from Lemma~\ref{lem3.1} and Lemma~\ref{lem3.2}.
Moreover, one can see that the map $\rho_{\lambda}$ in \eqref{eq:3.3} is $S_n$-equivariant homomorphism by \eqref{eq:3.2} and \eqref{eq:3.11}. We summarize the results in this section as follows. 

\begin{prop}\label{prop3.3} 
There exists an $S_n$-action on $H^{\ast}_{T^{\ell}}(\Spr)$ such that the map $\rho_{\lambda}$ in \eqref{eq:3.3} is $S_n$-equivariant homomorphism where the $S_n$-action on $H^{\ast}_{T^n}(Flags(\mathbb{C}^n))$ is given by \eqref{eq:3.2}.
\end{prop}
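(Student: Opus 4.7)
The strategy is to transport the $S_n$-action along $\iota_2$. Since $\iota_2\colon H^{\ast}_{T^{\ell}}(\Spr)\hookrightarrow \bigoplus_{w\in\Spr^{T^{\ell}}}\mathbb{Z}[u_1,\dots,u_{\ell}]$ is injective (noted in the text via the cellular decomposition and the vanishing of odd cohomology), it suffices to show that the action on the target described in \eqref{eq:3.9} leaves $\iota_2(H^{\ast}_{T^{\ell}}(\Spr))$ invariant. Once this is established, we define $v\cdot \alpha := \iota_2^{-1}(v\cdot \iota_2(\alpha))$ for $\alpha\in H^{\ast}_{T^{\ell}}(\Spr)$ and $v\in S_n$, and the $S_n$-equivariance of $\rho_{\lambda}$ will reduce to checking it on the ring generators $\bar x_i, t_i$.

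For the invariance, the first thing I would verify is that \eqref{eq:3.9} defines an $S_n$-action by ring automorphisms on $\bigoplus_{w\in\Spr^{T^{\ell}}}\mathbb{Z}[u_1,\dots,u_{\ell}]$ (routine: the action is componentwise after permuting the indexing set, and the representative $w'$ is uniquely determined by $[wv]$ via condition \eqref{eq:2.2}). Then by Lemma~\ref{lem3.1} the image $\iota_2(H^{\ast}_{T^{\ell}}(\Spr))$ is the subring generated by $\iota_2(\bar y_1),\dots,\iota_2(\bar y_n),\iota_2(u_1),\dots,\iota_2(u_{\ell})$. Lemma~\ref{lem3.2} says precisely that each generator of the form $\iota_2(\bar y_i)$ is sent to another such generator, namely $\iota_2(\bar y_{v(i)})$, while the $\iota_2(u_i)$ are fixed. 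Since the action is by ring automorphisms, it thus sends the generating set to itself and hence preserves the image.

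Pulling back through the injection $\iota_2$ therefore yields a well-defined $S_n$-action on $H^{\ast}_{T^{\ell}}(\Spr)$, and the generator formulas $w\cdot \bar y_i = \bar y_{w(i)}$ and $w\cdot u_i = u_i$ in \eqref{eq:3.11} are immediate consequences of Lemma~\ref{lem3.2}. It remains to verify that $\rho_{\lambda}$ is $S_n$-equivariant. Because both $H^{\ast}_{T^n}(Flags(\mathbb{C}^n))$ and $H^{\ast}_{T^{\ell}}(\Spr)$ are generated as rings by $\bar x_i,t_i$ and $\bar y_i,u_i$ respectively, and because $\rho_{\lambda}$ is a ring homomorphism, it suffices to test on these generators. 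By construction $\rho_{\lambda}(\bar x_i)=\bar y_i$, and $\rho_{\lambda}(t_i)=u_{\phi(i)}$ (from the factorization $T^{\ell}\hookrightarrow T^n$ classifying the characters). Then
\begin{equation*}
\rho_{\lambda}(w\cdot \bar x_i)=\rho_{\lambda}(\bar x_{w(i)})=\bar y_{w(i)}=w\cdot \bar y_i=w\cdot \rho_{\lambda}(\bar x_i),
\end{equation*}
and similarly $\rho_{\lambda}(w\cdot t_i)=\rho_{\lambda}(t_i)=u_{\phi(i)}=w\cdot u_{\phi(i)}=w\cdot \rho_{\lambda}(t_i)$, so $\rho_{\lambda}$ is $S_n$-equivariant.

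The only real subtlety, and the step I would flag as the main obstacle, is the invariance of $\iota_2(H^{\ast}_{T^{\ell}}(\Spr))$ under the action \eqref{eq:3.9}; this is exactly what Lemma~\ref{lem3.2} has been set up to deliver, combined with the fact that $\iota_2$ maps onto a subring. Once that is in place the rest is formal from injectivity of $\iota_2$ and the fact that both rings in question are generated by the classes whose transformation laws under $S_n$ match up.
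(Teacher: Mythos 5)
Your proof is correct and takes essentially the same approach as the paper: it transports the action \eqref{eq:3.9} on $\bigoplus_{w\in\Spr^{T^{\ell}}}\mathbb{Z}[u_1,\dots,u_{\ell}]$ back through the injection $\iota_2$ using Lemmas~\ref{lem3.1} and~\ref{lem3.2}, then verifies $S_n$-equivariance of $\rho_{\lambda}$ on ring generators via \eqref{eq:3.2} and \eqref{eq:3.11}. The paper states this more tersely but the underlying logic is identical; your spelling out of $\rho_{\lambda}(t_i)=u_{\phi(i)}$ and the check that \eqref{eq:3.9} acts by ring automorphisms simply makes explicit what the paper leaves to the reader.
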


\section{Main theorem} \label{sect:4}
In this section, we state our main theorem. For this purpose, let us clarify our notations.
We set $p_{\lambda}(s):=\lambda_{n-s+1}+\lambda_{n-s+2}+\cdots+\lambda_{\ell}$ for $s=1,\cdots,n$.
We denote by $\checklambda$ the transpose of $\lambda$. That is, $\checklambda=(\eta_1,\cdots,\eta_k)$ where $k=\lambda_1$ and $\eta_i = |\{j \mid \lambda_j\geq i\}|$ for $1\leq i\leq k$. 
For indeterminates $y_1,\cdots,y_s$ and $a_1,a_2,\cdots$, let
\begin{equation}\label{eq:4.1}
e_{d}(y_1,\cdots,y_s | a_1,a_2,\cdots) := \sum_{r=0}^d (-1)^{d-r} e_r(y_1,\cdots,y_s) h_{d-r}(a_1,\cdots,a_{s+1-d})
\end{equation}
for $d\geq 0$ where $e_i$ and $h_i$ denote the $i$-th elementary symmetric polynomial and the $i$-th complete symmetric polynomial, respectively. 
In fact, this is the factorial Schur function corresponding to the Young diagram consisting of the unique column of length $d$ as shown in the next section (see Lemma \ref{fact Schur by elem sym}).
We also define a map $\phi_{\lambda}:[n]\rightarrow[\ell]$ by the condition 
\begin{align}\label{eq:4.4}
&(u_{\phi_{\lambda}(1)},\cdots,u_{\phi_{\lambda}(n)}) \\ \notag
&\quad=
(\underbrace{u_1,\cdots,u_1}_{\lambda_1-\lambda_2}, 
\underbrace{u_1,u_2,\cdots,u_1,u_2}_{2(\lambda_2-\lambda_3)}, 
\cdots\cdots,
\underbrace{\uu_1,\uu_2,\cdots,\uu_{\ell}, \cdots\cdots,\uu_1,\uu_2,\cdots,\uu_{\ell}}_{\ell(\lambda_\ell-\lambda_{\ell+1})})
\end{align}
as ordered sequences where for each $1\leq \apfir \leq \ell$ the $\apfir$-th sector of the right-hand-side consists of $(u_1,u_2,\cdots,u_\apfir)$ repeated $(\lambda_\apfir-\lambda_{\apfir+1})$-times. Here, we denote $\lambda_{\ell+1}=0$.

Let us define a ring homomorphism
\begin{equation}\label{eq:4.5}
\psi:\mathbb{Z}[\yy_1,\cdots,\yy_n, \uu_1,\cdots,\uu_\ell]\rightarrow H_{T^{\ell}}^*(\mathcal{S}_{\lambda})
\end{equation}
by sending $\yy_i$ to $\bar \yy_i$ and $\uu_i$ to $\uu_i$ where $H^*(BT^{\ell})=\mathbb{Z}[u_1,\cdots,u_{\ell}]$. Recall that $\bar \yy_i$ is the equivariant first Chern class of the tautological line bundle $E_i/E_{i-1}$ over $Flags(\mathbb{C}^n)$ (see Section \ref{sect:3}) restricted to $\mathcal{S}_{\lambda}$. This homomorphism $\psi$ is a surjection by Lemma \ref{lem3.1} and the surjectivity of the projection map $\rho_{\lambda}$. 
\begin{thm}\label{main thm}
The map $\psi$ in \eqref{eq:4.5} induces a ring isomorphism 
\begin{equation*}
H_{T^{\ell}}^*(\mathcal{S}_{\lambda})
\cong 
\mathbb{Z}[\yy_1,\cdots,\yy_n, \uu_1,\cdots,\uu_\ell]/\widetilde{I}_{\lambda}
\end{equation*}
where $\widetilde{I}_{\lambda}$ is the ideal of the polynomial ring $\mathbb{Z}[\yy_1,\cdots,\yy_n, \uu_1,\cdots,\uu_\ell]$ generated by the polynomials $e_{d}(\yy_{i_1},\cdots,\yy_{i_s}|\uu_{\phi_{\lambda}(1)},\cdots,\uu_{\phi_{\lambda}(n)})$ defined in \eqref{eq:4.1} with $\phi_{\lambda}$ described in \eqref{eq:4.4} for $1\leq s\leq n$, $1\leq i_1<\cdots<i_s\leq n$, and $d \geq s+1-p_{\checklambda}(s)$.
\end{thm}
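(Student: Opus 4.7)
The plan is to carry out the $T^\ell$-equivariant analogue of Tanisaki's strategy from \cite{t}. Since $\psi$ is already known to be surjective, it remains to establish (a) the containment $\widetilde{I}_\lambda \subseteq \ker\psi$, and (b) that the induced surjection
\[ \bar\psi: \mathbb{Z}[y_1,\ldots,y_n,u_1,\ldots,u_\ell]/\widetilde{I}_\lambda \twoheadrightarrow H^*_{T^\ell}(\Spr) \]
is an isomorphism. The $S_n$-equivariance from Proposition \ref{prop3.3} is expected to play an essential role: it should reduce (a) to checking only the generators indexed by initial segments $\{i_1,\ldots,i_s\}=\{1,2,\ldots,s\}$, since every other such generator is an $S_n$-translate of one of these.

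For (a), I would use the injectivity of $\iota _2$ in \eqref{eq:3.3}. By \eqref{eq:3.7} the image of $\bar y_i$ at the fixed point $w\in\Spr^{T^\ell}$ is $u_{\phi(w(i))}$, where $\phi$ is the block-labeling map of \eqref{eq:3.4}. Thus the task reduces to the numerical vanishing
\[ e_d\bigl(u_{\phi(w(i_1))},\ldots,u_{\phi(w(i_s))} \,\big|\, u_{\phi_\lambda(1)},\ldots,u_{\phi_\lambda(n)}\bigr)=0 \]
for every $w$ satisfying \eqref{eq:2.2}, every $\{i_1<\cdots<i_s\}\subseteq[n]$, and every $d\geq s+1-p_{\checklambda}(s)$. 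Using the expression of $e_d$ as a column factorial Schur polynomial (mentioned just before the theorem), $e_d$ factors into a signed sum over subsets, and the reshuffled shift sequence in \eqref{eq:4.4} is engineered so that, given the constraints imposed by \eqref{eq:2.2} on $w$ and by the lower bound on $d$, sufficiently many of the arguments $u_{\phi(w(i_j))}$ coincide with entries from an initial segment of the shift sequence, forcing the vanishing.

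For (b), I would exploit equivariant formality. The cellular decomposition of $\Spr$ noted before Lemma \ref{lem3.1} implies that $H^*_{T^\ell}(\Spr)$ is a free $\mathbb{Z}[u_1,\ldots,u_\ell]$-module whose reduction modulo $(u_1,\ldots,u_\ell)$ recovers $H^*(\Spr)$. Setting $u_i=0$ in the defining generators of $\widetilde{I}_\lambda$ recovers the classical Tanisaki generators $e_d(y_{i_1},\ldots,y_{i_s})$ with the same bound on $d$. By Tanisaki's non-equivariant theorem \cite{t}, which as noted in the introduction works over $\mathbb{Z}$, the reduction $\bar\psi \bmod (u_1,\ldots,u_\ell)$ is the isomorphism $\mathbb{Z}[y_1,\ldots,y_n]/I_\lambda \xrightarrow{\sim} H^*(\Spr)$. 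A graded Nakayama argument then reduces the injectivity of $\bar\psi$ to the $\mathbb{Z}[u_1,\ldots,u_\ell]$-freeness of $\mathbb{Z}[y_1,\ldots,y_n,u_1,\ldots,u_\ell]/\widetilde{I}_\lambda$, which I would establish by lifting the classical monomial basis of $\mathbb{Z}[y]/I_\lambda$ to a $\mathbb{Z}[u_1,\ldots,u_\ell]$-basis of the quotient via a Gr\"obner-style argument on leading terms of the factorial generators.

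The principal obstacle is the combinatorial vanishing in step (a): the precise interplay between $w$, the map $\phi$, the reshuffled sequence $\phi_\lambda$, and the bound $d\geq s+1-p_{\checklambda}(s)$ is delicate, and one must isolate exactly the right form of the vanishing theorem for column factorial Schur polynomials that accommodates these data. A secondary difficulty is the $\mathbb{Z}[u_1,\ldots,u_\ell]$-freeness in step (b): an explicit basis of $\mathbb{Z}[y,u]/\widetilde{I}_\lambda$ must be produced whose Poincar\'e series matches the one predicted by the cellular decomposition of $\Spr$.
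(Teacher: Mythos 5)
Your overall skeleton matches the paper's (surjectivity is given, so one proves (a) $\widetilde{I}_\lambda\subseteq\ker\psi$ and (b) the induced surjection $\bar\psi$ is an isomorphism, using the $S_n$-action to reduce (a) to $i_1=1,\ldots,i_s=s$), but step (a) is a genuine gap, and you acknowledge as much by calling the "delicate interplay" the principal obstacle. The vanishing claim you write down at fixed points is not proved and is not routine: the arguments $u_{\phi(w(i_j))}$ are repeated $u$-variables rather than distinct indeterminates in a ladder, so Okounkov-style vanishing of factorial Schur functions does not apply directly, and no concrete combinatorial argument is offered. The paper bypasses localization entirely here. It chooses a $T^n$-invariant flag $U_\bullet$ refining $(\cdots\subset N_0^2\mathbb{C}^n\subset N_0\mathbb{C}^n\subset\mathbb{C}^n)$, uses the geometric inclusion $p(\Spr)\subset X_{\mu_0}(U_\bullet)$ in $Gr_s(\mathbb{C}^n)$ (Tanisaki's Proposition 3, with $\mu_0$ a rectangle of height $p_{\checklambda}(s)$), and the fact that opposite Schubert varieties $X_{\mu}(\wN\tilde F_\bullet)$ and $X_{\nu}(U_\bullet)$ are disjoint unless $\mu\subset\nu^\dagger$, to conclude $i^*\tilde S_{\mu_{s,d}}=0$ for $d\geq s+1-p_{\checklambda}(s)$. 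The Knutson--Tao factorial Schur formula, together with a careful choice of $\wN$ so that $\phi\circ\wN=\phi_\lambda$ (this is where \eqref{eq:4.4} comes from), identifies $p^*\tilde S_{\mu_{s,d}}$ with $(-1)^d e_d(\bar x_1,\ldots,\bar x_s\,|\,t_{\wN(1)},\ldots,t_{\wN(n)})$, and the commutative diagram \eqref{eq:5.2} then kills $\rho_\lambda(p^*\tilde S_{\mu_{s,d}})$. None of this Schubert-calculus input appears in your proposal, and it is the real content of the proof.

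For step (b), your plan would probably work but does more than necessary. The paper does not prove that $\mathbb{Z}[y,u]/\widetilde I_\lambda$ is free over $\mathbb{Z}[u]$ (so no Gr\"obner/basis-lifting is needed); it shows only, by an elementary induction on degree (Lemma~\ref{gene}), that this quotient is generated as a $\mathbb{Z}[u]$-module by $\binom{n}{\lambda}$ elements lifted from Tanisaki's $\mathbb{Z}$-basis of $\mathbb{Z}[y]/I_\lambda$. Since $H^*_{T^\ell}(\Spr)$ is a free $\mathbb{Z}[u]$-module of the same rank $\binom{n}{\lambda}$ by equivariant formality, a surjection from a module with $\binom{n}{\lambda}$ generators onto a free module of rank $\binom{n}{\lambda}$ over a commutative Noetherian ring is automatically injective. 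This is cleaner than the Nakayama-plus-freeness route you sketch, though the two are in the same spirit.
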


\begin{rmk}
The ideal $\widetilde{I}_{\lambda}$ is the $T^{\ell}$-equivariant analogue of so-called Tanisaki's ideal (it is written as $K_{\checklambda}$ in \cite{t}). Each generator of $\widetilde{I}_{\lambda}$ given above specializes to a generator of Tanisaki's ideal given in \cite{t} after the evaluation $u_i=0$ for all $i$.
\end{rmk}

\section{Proof of the main theorem} \label{sect:5}
In this section, we prove Theorem~\ref{main thm}. Our argument is the $T^{\ell}$-equivariant version of \cite{t}.
We first show that $e_{d}(\bar \yy_{i_1},\cdots,\bar \yy_{i_s}|\uu_{\phi_{\lambda}(1)},\cdots,\uu_{\phi_{\lambda}(n)})=0$ in $H^{\ast}_{T^\ell}(\mathcal{S}_{\lambda})$ for $1\leq s\leq n$, $1\leq i_1<\cdots<i_s\leq n$, and $d \geq s+1-p_{\checklambda}(s)$.
By the $S_n$-action on $H^{\ast}_{T^\ell}(\mathcal{S}_{\lambda})$ constructed in Section \ref{sect:3}, we may assume that $i_1=1,\cdots,i_s=s$. 
We prove the claim for the case $s=n$ before treating general cases $s<n$. In this case, we have that $d \geq n+1-p_{\checklambda}(n)=1$.
Observe that in $H_{T^n}^*(Flags(\mathbb{C}^n))$ we have
\begin{align*}
&e_{\dpara}(\bar x_1,\cdots,\bar x_n | t_1,\cdots,t_n) \\
&\qquad=
\sum_{r=0}^d (-1)^{d-r} e_r(\bar x_1,\cdots, \bar x_n) h_{d-r}(t_1,\cdots,t_{n+1-d}) \\
&\qquad=
\sum_{r=0}^d (-1)^{d-r} e_r(t_1,\cdots, t_n) h_{d-r}(t_1,\cdots,t_{n+1-d})
\end{align*}
by the presentation given in \eqref{eq:3.1.5}. 
It is straightforward to check that this is equal to $e_d(t_{n+2-d},\cdots,t_n)$ (which is zero since the number of variables is greater than $d$) by considering the generating functions with a formal variable $z$ for elementary and complete symmetric polynomials  :
\begin{align*}
&\prod_{i=1}^n (1-t_iz) = \sum_{r=0} ^{n} (-1)^re_r(t_1,\cdots, t_n) z^r, \\
&\prod_{i=1}^n \frac{1}{1-t_iz} = \sum_{r\geq0} h_r(t_1,\cdots, t_n) z^r.
\end{align*}
That is, the polynomial $e_{\dpara}(\bar x_1,\cdots,\bar x_n | t_1,\cdots,t_n)$ vanishes in $H^{\ast}_{T^n}(Flags(\mathbb{C}^n))$, and hence we see that $e_{\dpara}(\bar y_1,\cdots,\bar y_n | u_{\phi_{\lambda}(1)},\cdots,u_{\phi_{\lambda}(n)})=0$.

Let us next consider the cases for $s<n$, and prove that for $d \geq s+1-p_{\checklambda}(s)$ we have $e_{d}(\bar \yy_{1},\cdots,\bar \yy_{s}|\uu_{\phi_{\lambda}(1)},\cdots,\uu_{\phi_{\lambda}(n)})=0$ in $H^{\ast}_{T^\ell}(\mathcal{S}_{\lambda})$. Take a $T^n$-invariant complete flag $U_{\bullet}$ by refining the flag $(\cdots\subset N_0^2\mathbb{C}^n \subset N_0\mathbb{C}^n \subset \mathbb{C}^n)$. This is possible since $N_0$ is in Jordan canonical form.
We denote by $\wN$ the element of $S_n$ corresponding to $U_{\bullet}$, i.e. $U_{\bullet}=\wN F_{\bullet}$ where $F_{\bullet}$ is the standard flag defined by $F_i=\langle e_1, \cdots, e_i \rangle$ for all $1\leq i\leq n$. 
For a Young diagram $\mu$ with at most $s$ rows and $n-s$ columns, the Schubert variety corresponding to $\mu$ with respect to the reference flag $U_{\bullet}$ is
\begin{equation*}
X_{\mu}(U_{\bullet}) = \{ V \in Gr_s(\mathbb{C}^n) \mid \dim(V\cap U_{n-s+i-\mu_i})\geq i \text{ for all } 1\leq i\leq s \}
\end{equation*}
where $Gr_s(\mathbb{C}^n)$ denotes the set of $s$ dimensional complex linear subspaces in $\mathbb{C}^n$.
It is known that $X_{\mu}(\tilde{F}_{\bullet}) \cap X_{\nu}(F_{\bullet}) = 
\emptyset$ unless $\mu \subset \nu^{\dagger}$ (cf. \cite{f} \S\ 9.4, Lemma 3).
Here, $\nu^{\dagger}=(n-s-\nu_s,\cdots,n-s-\nu_1)$ and $\tilde{F}_{\bullet}$ is the opposite flag of $F_{\bullet}$ defined by $\tilde{F}_{i}=\langle e_{n+1-i},\cdots,e_n \rangle$.
By multiplying both sides of this equality by $\wN$, we get
\begin{equation}\label{eq:5.1}
X_{\mu}(\wN\tilde{F}_{\bullet}) \cap X_{\nu}(U_{\bullet}) = 
\emptyset \quad \text{unless } \mu \subset \nu^{\dagger}.
\end{equation}
Since the flag $\wN\tilde{F}_{\bullet}$ is $T^n$-invariant, the Schubert variety $X_{\mu}(\wN\tilde{F}_{\bullet})$ is a $T^n$-invariant irreducible subvariety of $Gr_s(\mathbb{C}^n)$.
Let $\tilde{S}_{\mu}:=[X_{\mu}(\wN\tilde{F}_{\bullet})]\in H^{\ast}_{T^n}(Gr_s(\mathbb{C}^n))$ be the associated $T^n$-equivariant cohomology class.

Let $p:Flags(\mathbb{C}^n)\rightarrow Gr_s(\mathbb{C}^n)$ be the projection defined by $p(V_{\bullet})=V_s$.
Then it follows that
\begin{equation*}
p(\mathcal{S}_{\lambda})\subset X_{\mu_0}(U_{\bullet})
\end{equation*}
where $\mu_0=(n-s,\cdots,n-s,0,\cdots,0)$ with $n-s$ repeated $p_{\checklambda}(s)$-times and $0$ repeated $(s-p_{\checklambda}(s))$-times (cf. \cite{t} \S\ 3, Proposition 3). 
Hence, we obtain the following commutative diagram
\begin{equation}\label{eq:5.2}
\begin{CD}
H^{\ast}_{T^n}(Flags(\mathbb{C}^n))@<{p^*}<< H^{\ast}_{T^n}(Gr_s(\mathbb{C}^n))\\
@V{\rho _\lambda}VV @VV{i^* }V\\
H^{\ast}_{T^{\ell}}(\mathcal{S}_{\lambda})@<{k^*}<< H^{\ast}_{T^n}(X_{\mu_0}(U_{\bullet}))
\end{CD}
\end{equation}
where $i^*$ is the map induced by the inclusion and $k$ is the restriction of the projection map $p$.
Let $\mu_{s,\dpara}=(1,\cdots,1,0,\cdots,0)$ with $1$ repeated $\dpara$-times and $0$ repeated $(s-\dpara)$-times. This Young diagram has at most $s$ rows and $n-s$ columns since we are assuming that $s<n$.
Recall that the $T^n$-equivariant Schubert class $\tilde{S}_{\mu}=[X_{\mu}(\wN\tilde{F}_{\bullet})]$ comes from the relative cohomology $H^{\ast}_{T^n}(Gr_s(\mathbb{C}^n), Gr_s(\mathbb{C}^n)\backslash X_{\mu}(\wN\tilde{F}_{\bullet}))$. 
So it follows that $i^*\tilde{S}_{\mu_{s,\dpara}} = 0$ for $\dpara\geq s+1-p_{\checklambda }(s)$ since $\mu_{s,\dpara}\not\subset\mu_0^{\dagger}$ and (\ref{eq:5.1}) show that any cycle in $X_{\mu_0}(U_{\bullet})$ does not intersect with $X_{\mu_{s,\dpara}}(\wN\tilde{F}_{\bullet})$. Thus, we obtain $\rho_{\lambda}(p^*\tilde{S}_{\mu_{s,\dpara}})=0$ by the commutativity of the diagram \eqref{eq:5.2}.

To give a polynomial representative of $\rho_{\lambda}(p^*\tilde{S}_{\mu_{s,\dpara}})$, let us first describe $p^*\tilde{S}_{\mu_{s,\dpara}}$ in terms of $\bar x_1,\cdots,\bar x_n$ and $t_1,\cdots,t_n$.
Observe that $w\in S_n$ acts on $\mathbb{C}^n$ from the left by 
\begin{equation*}
w\cdot(x_1,\cdots,x_n)=(x_{w^{-1}(1)},\cdots,x_{w^{-1}(n)})
\end{equation*}
for $(x_1,\cdots,x_n)\in\mathbb{C}^n$, and this naturally induces $S_n$-action on $Flags(\mathbb{C}^n)$. 
For each $w\in S_n$, the induced map on $Flags(\mathbb{C}^n)$ is equivariant with respect to a group homomorphism $\psi_w: T^n\rightarrow T^n$ defined by $(g_1,\cdots,g_n)\mapsto (g_{w^{-1}(1)},\cdots,g_{w^{-1}(n)})$. 
This $\psi_w$ induces a ring homomorphism on $H^{\ast}(BT^n)=\mathbb{Z}[t_1,\cdots,t_n]$ :
\begin{equation*}
\psi_w^*: \mathbb{Z}[t_1,\cdots,t_n] \rightarrow \mathbb{Z}[t_1,\cdots,t_n] \quad ; \quad 
t_i\mapsto t_{w^{-1}(i)},
\end{equation*}
and the induced map $w^*$ on $H^{\ast}_{T^n}(Flags(\mathbb{C}^n))$ is a ring homomorphism satisfying $w^*(t_i\alpha)=\psi_w^*(t_i)w^*(\alpha)$ for any $\alpha\in H^{\ast}_{T^n}(Flags(\mathbb{C}^n))$ and $i=1,\cdots,n$ where the products are taken by the cup products via the canonical homomorphism $H^{\ast}(BT^n)\rightarrow H^{\ast}_{T^n}(Flags(\mathbb{C}^n))$.
Similarly, $S_n$ acts on $Gr_s(\mathbb{C}^n)$ from the left, and the projection $p:Flags(\mathbb{C}^n)\rightarrow Gr_s(\mathbb{C}^n)$ is $S_n$-equivariant.
Observe that $w^*\bar x_i=\bar x_i$ for any $w\in S_n$ since the map $w:Flags(\mathbb{C}^n)\rightarrow Flags(\mathbb{C}^n)$ pulls back each tautological line bundle $E_i/E_{i-1}$ to itself.

Recall from \cite{k-t} that the $T^n$-equivariant Schubert class $[X_{\mu}(F_{\bullet})]\in H^{\ast}_{T^n}(Gr_s(\mathbb{C}^n))$ with respect to the standard reference flag $F_{\bullet}$ is represented by the factorial Schur function (see \cite{m-s}) in the $T^n$-equivariant cohomology of $Flags(\mathbb{C}^n)$ :
\begin{equation*}
p^*[X_{\mu}(F_{\bullet})]
= s_{\mu}(-\bar x_1,\cdots,-\bar x_s | -t_n,\cdots,-t_1) .
\end{equation*}
For the convenience of the reader, we here recall the definition of factorial Schur functions from \cite{m-s}: for a Young diagram $\mu$ with at most $s$ rows, the factorial Schur function associated to $\mu$ is defined to be 
\begin{equation*}
s_{\mu}(x_1,\cdots,x_s | a_1,a_2, \cdots)
= \sum_{T} \prod_{\alpha\in\mu} (x_{T(\alpha)}-a_{T(\alpha)+c(\alpha)})
\end{equation*}
as a polynomial in $\mathbb{Z}[x_1,\cdots,x_s]\otimes\mathbb{Z}[a_1,a_2,\cdots]$
where $T$ runs over all semistandard tableaux of shape $\mu$ with entries in $\{1,\cdots,s\}$, $T(\alpha)$ is the entry of $T$ in the cell $\alpha\in\mu$, and $c(\alpha)=j-i$ is the content of $\alpha=(i,j)$.
This polynomial is symmetric in $x$-variables.

From the definition, we have that $X_{\mu}(\wN\tilde{F}_{\bullet})=\wN w_0 X_{\mu}(F_{\bullet})$ where $w_0\in S_n$ is the longest element with respect to the Bruhat order.
So it follows that 
\begin{align*}
p^*\tilde{S}_{\mu} 
&= p^*((\wN w_0)^{-1})^*[X_{\mu}(F_{\bullet})]
= ((\wN w_0)^{-1})^*p^*[X_{\mu}(F_{\bullet})] \\
&= s_{\mu}(-\bar x_1,\cdots,-\bar x_s | -t_{\wN(1)},\cdots,-t_{\wN(n)}) 
\end{align*}
since the projection $p : Flags(\mathbb{C}^n) \rightarrow Gr_s(\mathbb{C}^n)$ is equivariant with respect to the left $S_n$-actions.
In particular, the following lemma with the definition (\ref{eq:4.1}) shows that 
\begin{align}\label{eq:5.3}
p^*\tilde{S}_{\mu_{s,\dpara}} 
= (-1)^{\dpara} e_{\dpara}(\bar x_1,\cdots,\bar x_s | t_{\wN(1)},\cdots,t_{\wN(n)}).
\end{align}
\vspace{5pt}
\begin{lem}\label{fact Schur by elem sym}
For indeterminates $x_1,\cdots,x_s, a_1,a_2,\cdots$, we have
\begin{align*}
s_{\mu_{s,k}}(x_1,\cdots,x_s | a_1,a_2,\cdots) 
= \sum_{r=0}^k (-1)^{k-r} e_r(x_1,\cdots,x_s) h_{k-r}(a_1,\cdots,a_{s+1-k})
\end{align*}
for $k\geq 0$ where $\mu_{s,k}=(1,\cdots,1,0,\cdots,0)$ with $1$ repeated $k$-times and $0$ repeated $(s-k)$-times.
\end{lem}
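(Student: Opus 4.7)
My plan is to prove Lemma \ref{fact Schur by elem sym} by induction on $s$. Unfolding the combinatorial definition of the factorial Schur function for the column shape $\mu_{s,k}=(1^k)$, a semistandard tableau with entries in $\{1,\ldots,s\}$ is simply a strictly increasing sequence $1\le t_1<\cdots<t_k\le s$, and the cell $(i,1)$ has content $c(i,1)=1-i$. Thus
\[
s_{\mu_{s,k}}(x_1,\ldots,x_s \,|\, a_1,a_2,\ldots) \;=\; \sum_{1\le t_1<\cdots<t_k\le s}\; \prod_{i=1}^{k}\bigl(x_{t_i}-a_{t_i-i+1}\bigr).
\]
The shifted indices $t_i-i+1$ range over $\{1,\ldots,s-k+1\}$, matching the argument set $a_1,\ldots,a_{s-k+1}$ appearing on the right-hand side of the lemma.

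Splitting the above sum according to whether $t_k=s$ or $t_k\le s-1$ immediately gives the recursion
\[
L^{(s,k)} \;=\; L^{(s-1,k)} \;+\; (x_s - a_{s-k+1})\,L^{(s-1,k-1)},
\]
where $L^{(s,k)}$ denotes the left-hand side with parameters $(s,k)$. I would then verify that the right-hand side
\[
R^{(s,k)} \;:=\; \sum_{r=0}^{k} (-1)^{k-r}\, e_r(x_1,\ldots,x_s)\, h_{k-r}(a_1,\ldots,a_{s-k+1})
\]
satisfies the very same recursion. Substituting the standard identities $e_r(x_1,\ldots,x_s) = e_r(x_1,\ldots,x_{s-1}) + x_s\, e_{r-1}(x_1,\ldots,x_{s-1})$ and $h_{k-r}(a_1,\ldots,a_{s-k+1}) = h_{k-r}(a_1,\ldots,a_{s-k}) + a_{s-k+1}\, h_{k-r-1}(a_1,\ldots,a_{s-k+1})$ into the definition of $R^{(s,k)}$ and shifting the summation index so that the sign $(-1)^{k-r}$ repackages correctly, one obtains precisely $R^{(s-1,k)} \,+\, (x_s - a_{s-k+1})\,R^{(s-1,k-1)}$.

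The base case $s=k$ is immediate: the unique tableau has $t_i=i$, giving $L^{(k,k)}=\prod_{i=1}^{k}(x_i-a_1)$, while $R^{(k,k)}=\sum_{r=0}^{k}(-1)^{k-r}e_r(x_1,\ldots,x_k)\,a_1^{k-r}=\prod_{i=1}^{k}(x_i-a_1)$ by the standard binomial-type expansion. An induction on $s$ (or, equivalently, a double induction on $(s,k)$) then completes the proof. The only real obstacle I foresee is careful bookkeeping of the $a$-variables: the number $s-k+1$ of them in $R^{(s,k)}$ shifts differently when $s$ decreases (dropping $a_{s-k+1}$ to give $R^{(s-1,k)}$ on $a_1,\ldots,a_{s-k}$) versus when $k$ decreases (keeping $a_{s-k+1}$ to give $R^{(s-1,k-1)}$ on $a_1,\ldots,a_{s-k+1}$), and the two recursions above are precisely what is needed to absorb this asymmetric shift.
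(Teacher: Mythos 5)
Your proposal is correct, and it proves the lemma by a genuinely different route from the paper. The paper's proof is a direct coefficient extraction: it fixes the monomial $x_1\cdots x_r$, observes that a tableau $t_1<\cdots<t_k$ can only contribute this monomial if $t_i=i$ for $i\le r$, and then identifies the free choices $i_1<\cdots<i_{k-r}$ in positions $r+1,\ldots,k$ (after the reindexing $i_m\mapsto i_m-m-r+1$) with the multisets indexing $h_{k-r}(a_1,\cdots,a_{s-k+1})$; it then invokes the symmetry of the factorial Schur function in the $x$-variables to upgrade from $x_1\cdots x_r$ to an arbitrary $x_{j_1}\cdots x_{j_r}$. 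You instead derive the Pieri-type recursion $L^{(s,k)}=L^{(s-1,k)}+(x_s-a_{s-k+1})L^{(s-1,k-1)}$ from the tableau sum, verify the same recursion for $R^{(s,k)}$ using the deletion identities for $e_r$ and $h_{k-r}$, and close with the base case $s=k$. Your approach has the small advantage of being self-contained (it does not need the quoted symmetry of $s_\mu(x|a)$ in $x$, and it avoids the slightly delicate point of why only tableaux with $t_i=i$ for $i\le r$ contribute), at the cost of the index bookkeeping you flag. One minor completeness remark: in the induction on $s$ you should also dispose of $k=0$ (trivially $L^{(s,0)}=1=R^{(s,0)}$), since the recursion formally invokes $L^{(s-1,-1)}$ there; with that noted, both base cases $k=0$ and $k=s$ together with the recursion cover all $0\le k\le s$.
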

\proof
We first find the coefficient of the monomial $x_1\cdots x_r$ in $s_{\mu_{s,k}}(x|a)$.
For each 
$I=(i_1,i_2,\cdots,i_{k-r})$ satisfying $r+1\leq i_1<i_2<\cdots<i_{k-r}\leq s$, there is a summand in $s_{\mu_{s,k}}(x|a)$ corresponding to the standard tableau $T_{I}$ of shape $\mu_{s,k}$ whose $(j,1)$-th entry is
\begin{align*}
\begin{cases}
j \quad &\text{if $1\leq j\leq r$,} \\
i_{j-r} &\text{if $r+1\leq j\leq k$.}
\end{cases}
\end{align*}
The summand is of the form
\begin{align*}
(x_1-a_1)(x_2-a_1)\cdots(x_r-a_1)  (x_{i_1}-a_{i_1-r})(x_{i_2}-a_{i_2-r-1})\cdots(x_{i_{k-r}}-a_{i_{k-r}-k+1}),
\end{align*}
and the contribution of the monomial $x_1\cdots x_r$ from this polynomial is \begin{align*}
(-1)^{k-r}(a_{i_1-r}a_{i_2-r-1}\cdots a_{i_{k-r}-k+1})x_1\cdots x_r.
\end{align*}
Since the condition on $I$ is equivalent to  
\begin{align*}
1\leq i_1-r\leq i_2-r-1\leq \cdots\leq i_{k-r}-k+1\leq s-k+1,
\end{align*}
we see that the coefficient of $x_1\cdots x_r$ in $s_{\mu_{s,k}}(x_1,\cdots,x_s | a_1,a_2,\cdots)$ is 
\begin{align*}
(-1)^{k-r} h_{k-r}(a_1,\cdots,a_{s-k+1}).
\end{align*}
Recalling that $s_{\mu_{s,k}}(x_1,\cdots,x_s | a_1,a_2,\cdots)$ is symmetric in $x$-variables, we conclude that the coefficient of $x_{j_1}\cdots x_{j_{r}}$ is $(-1)^{k-r} h_{k-r}(a_1,\cdots,a_{s-k+1})$ for any $1\leq j_1<\cdots<j_r\leq s$. Thus, the polynomial
\begin{align*}
(-1)^{k-r} e_r(x_1,\cdots,x_s)h_{k-r}(a_1,\cdots,a_{s-k+1})
\end{align*}
gives the summand in $s_{\mu_{s,k}}(x_1,\cdots,x_s | a_1,a_2,\cdots)$ whose degree in $x$-variables is $r$. 
\qed

From now on, we take a specific choice of $\wN$ as follows, and we study the image of the Schubert classes $p^*\tilde{S}_{\mu}$ under $\rho_{\lambda}$.
We choose $\wN$ so that its one-line notation is given by
\begin{align*}
\wN = J_1 \cdots J_{\ell}
\end{align*}
where each sector $J_{\apfir}$ is a sequence of subsectors 
\begin{align*}
J_{\apfir} = j_{\apfir}^{(1)} \cdots j_{\apfir}^{(\lambda_{\apfir}-\lambda_{\apfir+1})}
\end{align*}
consisted by sequences of the form
\begin{align*}
j_{\apfir}^{(m)}=(\lambda_1-\lambda_{\apfir})+m \ , \ (\lambda_1-\lambda_{\apfir})+\lambda_2+m \ , \ \dots\dots \ , \ (\lambda_1-\lambda_{\apfir})+\lambda_2+\cdots+\lambda_{\apfir}+m.
\end{align*}
Note that $j_{\apfir}^{(m)}$ is a sequence of length $r$, and 
 $J_{\apfir}$ is a sequence of length $r(\lambda_{\apfir}-\lambda_{\apfir+1})$.
We define $J_{\apfir}$ to be the empty sequence if $\lambda_r=\lambda_{r+1}$. Writing down $J_{\apfir}$ for some small $r$, the reader can see how the complete flag $\wN F_{\bullet}$ refines the flag $(\cdots\subset N_0^2\mathbb{C}^n \subset N_0\mathbb{C}^n \subset \mathbb{C}^n)$.
\begin{ex}
If $n=16$ and $\lambda=(7,5,2,2)$, then
\begin{align*}
\wN = 1 \ 2 \ 3 \ 8 \ 4 \ 9 \ 5 \ 10 \ 6 \ 11 \ 13 \ 15 \ 7 \ 12 \ 14 \ 16
\end{align*}
where $J_1=j_1^{(1)}j_1^{(2)}=1 \ 2$, $J_2=j_2^{(1)}j_2^{(2)}j_2^{(3)}= 3 \ 8 \ 4 \ 9 \ 5 \ 10$, $J_3$ is the empty sequence, and $J_4=j_4^{(1)}j_4^{(2)}= 6 \ 11 \ 13 \ 15 \ 7 \ 12 \ 14 \ 16$. The reader should check that $\wN F_{\bullet}$ refines the flag $(\cdots\subset N_0^2\mathbb{C}^n \subset N_0\mathbb{C}^n \subset \mathbb{C}^n)$.
\end{ex}
The map  $\phi:[n]\rightarrow[\ell]$ defined in \eqref{eq:3.4} takes each sequence $j_{\apfir}^{(m)}$ to the sequence $1,\cdots,{\apfir}$ since $k$-th number of  $j_{\apfir}^{(m)}$ satisfies 
\begin{align*}
\lambda_1+\cdots+\lambda_{k-1}+1
\leq
(\lambda_1-\lambda_{\apfir})+\lambda_2+\cdots+\lambda_{k}+m 
\leq
\lambda_1+\cdots+\lambda_{k}.
\end{align*}
This shows that $\phi\circ \wN$ coincides with the map $\phi_{\lambda}$ defined in \eqref{eq:4.4}.
Taking $\rho_{\lambda}$ to \eqref{eq:5.3}, we obtain
\begin{equation*}
\rho_{\lambda}\circ p^*(\tilde{S}_{\mu_{s,\dpara}}) 
= (-1)^{\dpara} e_{\dpara}(\bar y_1,\cdots,\bar y_s | u_{\phi_{\lambda}(1)},\cdots,u_{\phi_{\lambda}(n)}) 
\end{equation*}
in $H^{\ast}_{T^\ell}(\mathcal{S}_{\lambda})$.
Since $i^*(\tilde{S}_{\mu_{s,j}})=0$, the commutative diagram \eqref{eq:5.2} shows that the left-hand-side of this equality vanishes.

\vspace{10pt}
Now, the homomorphism \eqref{eq:4.5} induces a surjective ring homomorphism 
\begin{equation*}
\bar\psi : \mathbb{Z}[\yy_1,\cdots,\yy_n, \uu_1,\cdots \uu_\ell]/\widetilde{I}_{\lambda}
\longrightarrow
H_{T^{\ell}}^*(\mathcal{S}_{\lambda}).
\end{equation*}
In what follows, we prove that this is an isomorphism by thinking of both sides as $\mathbb{Z}[\uu_1,\cdots \uu_\ell]$-algebras. Namely, the ring on the left-hand-side admits the obvious multiplication by $\uu_1,\cdots,\uu_n$, and the ring on the right-hand-side has the canonical ring homomorphism $H^{\ast}(BT^{\ell})\rightarrow H^{\ast}_{T^{\ell}}(\Spr)$ with the identification $H^{\ast}(BT^{\ell})=\mathbb{Z}[\uu_1,\cdots \uu_\ell]$.

Recall that $\mathcal{S}_{\lambda}$ admits a cellular decomposition by even dimensional cells (\cite{spa}).
So the spectral sequence for the fiber bundle $ET^{\ell}\times_{T^{\ell}}\mathcal{S}_{\lambda}\rightarrow BT^{\ell}$ shows that $H_{T^{\ell}}^*(\mathcal{S}_{\lambda})$ is a free $\mathbb{Z}[\uu_1,\cdots,\uu_{\ell}]$-module and that its rank over $\mathbb{Z}[\uu_1,\cdots,\uu_{\ell}]$ coincides with the rank of the non-equivariant cohomology:
\begin{align*}
\text{rank}_{\mathbb{Z}[\uu_1,\cdots,\uu_{\ell}]}H_{T^{\ell}}^*(\mathcal{S}_{\lambda})
=
\text{rank}_{\mathbb{Z}} H^*(\mathcal{S}_{\lambda})
= \frac{n!}{\lambda_1!\lambda_2!\cdots\lambda_{\ell}!} 
=: \binom{n}{\lambda}.
\end{align*}
Hence, to prove that the map $\bar\psi$ is an isomorphism, it is sufficient to show that the module $\mathbb{Z}[\yy_1,\cdots,\yy_n, \uu_1,\cdots \uu_\ell]/\widetilde{I}_{\lambda}$ is generated by $\binom{n}{\lambda}$ elements as a $\mathbb{Z}[\uu_1,\cdots,\uu_{\ell}]$-module.
To do that, let us consider a graded ring\footnote{The argument in \cite{t} to give a presentation of the ring $H^*(\mathcal{S}_{\lambda};\mathbb{C})$ works also over $\mathbb{Z}$-coefficient, and in that sense this ring is the presentation given in \cite{t}.}
$\mathbb{Z}[y_1,\cdots,y_n]/I_{\lambda}$ where $I_{\lambda}$ is Tanisaki's ideal, namely this is generated by $e_{d}(\yy_{i_1},\cdots,\yy_{i_s})$ for $1\leq s\leq n$, $1\leq i_1<\cdots<i_s\leq n$, and $d \geq s+1-p_{\checklambda }(s)$. In \cite{t}, it is shown that this is a free $\mathbb{Z}$-module of rank $\binom{n}{\lambda}$. 
\begin{lem}\label{gene}
Let $\Phi_1(y),\cdots,\Phi_k(y)$ be homogeneous polynomials in $\mathbb{Z}[\yy_1,\cdots,\yy_n]$ which give an additive basis of $\mathbb{Z}[\yy_1,\cdots,\yy_n]/I_{\lambda}$ where $k=\binom{n}{\lambda}$.
If we think of $\Phi_1(y),\cdots,\Phi_k(y)$ as elements of $\mathbb{Z}[\yy_1,\cdots,\yy_n,u_1,\cdots,u_{\ell}]/\tilde{I}_{\lambda}$, then they generate $\mathbb{Z}[\yy_1,\cdots,\yy_n,u_1,\cdots,u_{\ell}]/\tilde{I}_{\lambda}$ as a $\mathbb{Z}[u_1,\cdots,u_{\ell}]$-module. 
\end{lem}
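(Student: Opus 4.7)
The strategy is to set $M := \mathbb{Z}[\yy_1,\dots,\yy_n,\uu_1,\dots,\uu_\ell]/\widetilde{I}_\lambda$ and let $N\subseteq M$ be the $\mathbb{Z}[\uu_1,\dots,\uu_\ell]$-submodule generated by $\Phi_1(y),\dots,\Phi_k(y)$; the goal is to conclude $M/N=0$. I would view $\mathbb{Z}[\yy_1,\dots,\yy_n,\uu_1,\dots,\uu_\ell]$ as a graded ring with $\deg \yy_i = \deg \uu_j = 1$, observe from the explicit formula \eqref{eq:4.1} that every generator of $\widetilde{I}_\lambda$ is homogeneous of degree $d$, and take the $\Phi_j$ to be homogeneous (decomposing into homogeneous components if necessary). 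Then $M$ and $N$ are both graded, and $M/N$ is a graded $\mathbb{Z}[\uu_1,\dots,\uu_\ell]$-module supported in nonnegative degrees.

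The key identification is that setting $\uu_1 = \cdots = \uu_\ell = 0$ turns $\widetilde{I}_\lambda$ into Tanisaki's ideal $I_\lambda$ verbatim. Indeed, unwinding \eqref{eq:4.1} gives
\begin{equation*}
e_d(\yy_{i_1},\dots,\yy_{i_s} \mid \uu_{\phi_\lambda(1)},\dots,\uu_{\phi_\lambda(n)}) \big|_{\uu=0}
= \sum_{r=0}^d (-1)^{d-r} e_r(\yy_{i_1},\dots,\yy_{i_s}) h_{d-r}(0,\dots,0)
= e_d(\yy_{i_1},\dots,\yy_{i_s}),
\end{equation*}
since $h_0=1$ and $h_k(0,\dots,0)=0$ for $k>0$. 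Hence there is a natural ring isomorphism $M/(\uu_1,\dots,\uu_\ell)M \cong \mathbb{Z}[\yy_1,\dots,\yy_n]/I_\lambda$ under which each $\Phi_j$ maps to its class in Tanisaki's quotient.

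By hypothesis the $\Phi_j$ span $\mathbb{Z}[\yy_1,\dots,\yy_n]/I_\lambda$ additively, so the image of $N$ in $M/(\uu_1,\dots,\uu_\ell)M$ is everything; equivalently $M = N + (\uu_1,\dots,\uu_\ell)M$, i.e.\ $(\uu_1,\dots,\uu_\ell)\cdot(M/N) = M/N$. To finish, I would invoke the graded Nakayama principle: since each $\uu_i$ has positive degree and $M/N$ is supported in degrees $\geq 0$, the equality $(M/N)_d = \sum_{i} \uu_i \cdot (M/N)_{d-1}$ together with the vanishing $(M/N)_{-1}=0$ forces the base case $(M/N)_0 = 0$, and induction on $d$ then yields $(M/N)_d = 0$ for all $d\geq 0$. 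Therefore $M = N$, as required. The only substantive step is the specialization at $\uu=0$ identifying $\widetilde{I}_\lambda|_{\uu=0}$ with $I_\lambda$; everything else is formal, so I do not anticipate a serious obstacle in this lemma.
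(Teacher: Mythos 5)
Your proof is correct and follows essentially the same route as the paper, differing only in packaging. Both arguments hinge on the observation that the specialization $u_i \mapsto 0$ carries the generators of $\widetilde{I}_\lambda$ to the generators of Tanisaki's ideal $I_\lambda$, hence identifies $M/(u_1,\dots,u_\ell)M$ with $\mathbb{Z}[y_1,\dots,y_n]/I_\lambda$, and both then deduce generation by a degree argument. The paper carries out the final step by an explicit induction on the $y$-degree of a monomial $m$: it introduces the map $\theta\colon \mathbb{Z}[y,u]/\widetilde{I}_\lambda \to \mathbb{Z}[y]/I_\lambda$, proves $\ker\theta$ is the ideal generated by $u_1,\dots,u_\ell$, writes $m = \sum_i a_i\Phi_i(y) + \sum_j f_j(y,u)u_j$, truncates to the degree-$d$ homogeneous component so that each $f_j$ has $y$-degree $< d$, and applies the inductive hypothesis. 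You compress precisely that induction into the graded Nakayama principle for the $\mathbb{N}$-graded $\mathbb{Z}[u]$-module $M/N$: once the homogeneity of the generators of $\widetilde{I}_\lambda$ and the equality $M = N + (u_1,\dots,u_\ell)M$ are noted, the conclusion is formal, and you avoid having to identify $\ker\theta$ explicitly. This is a modest but genuine tidying. One small redundancy in your write-up: the lemma already hypothesizes that the $\Phi_j$ are homogeneous, so the parenthetical about decomposing them into homogeneous components is unnecessary.
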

\begin{proof}
It suffices to show that any monomial $m$ of $\yy_1,\cdots,\yy_n$ in $\mathbb{Z}[\yy_1,\cdots,\yy_n,u_1,\cdots,u_{\ell}]/\tilde{I}_{\lambda}$ can be written as a $\mathbb{Z}[u_1,\cdots,u_{\ell}]$-linear combination of $\Phi_1(y),\cdots,\Phi_k(y)$.
We prove this by induction on the degree $d$ of $m$.
The base case $d=0$ is clear, i.e.  $\Phi_i(y)=1$ for some $i$. We assume that $d\geq 1$ and the claim holds for $d-1$. Let $\theta$ be a homomorphism from $\mathbb{Z}[\yy_1,\cdots,\yy_n,u_1,\cdots,u_{\ell}]/\tilde{I}_{\lambda}$ to $\mathbb{Z}[\yy_1,\cdots,\yy_n]/I_{\lambda}$ sending $y_i$ to $y_i$ and $u_i$ to $0$. This is well-defined since each generator $e_{\dpara}(\bar y_1,\cdots,\bar y_s | u_{\phi_{\lambda}(1)},\cdots,u_{\phi_{\lambda}(n)})$ of $\tilde{I}_{\lambda}$ is mapped to the corresponding generator $e_{d}(\yy_{i_1},\cdots,\yy_{i_s})$ of $I_{\lambda}$.
By the assumption, $\theta(m)$ can be written as a $\mathbb{Z}$-linear combination of $\Phi_1(y),\cdots,\Phi_k(y)$, that is, we have 
\begin{align*}
m-\sum_{i} a_i\Phi_i(y)\in \ker \theta
\end{align*}
for some $a_i\in\mathbb{Z}$.
Here, $\ker \theta$ is the ideal of $\mathbb{Z}[\yy_1,\cdots,\yy_n,u_1,\cdots,u_{\ell}]/\tilde{I}_{\lambda}$ generated by $u_1,\cdots,u_{\ell}$.
In fact, it follows that the image of ${I}_{\lambda}$ in $\mathbb{Z}[\yy_1,\cdots,\yy_n,u_1,\cdots,u_{\ell}]/\tilde{I}_{\lambda}$ is included in the ideal $(u_1,\cdots,u_{\ell})$ of $\mathbb{Z}[\yy_1,\cdots,\yy_n,u_1,\cdots,u_{\ell}]/\tilde{I}_{\lambda}$ 
from the following equation in $\mathbb{Z}[\yy_1,\cdots,\yy_n,u_1,\cdots,u_{\ell}]/\tilde{I}_{\lambda}$:
\begin{align*}
e_d(\yy_{i_1},\cdots,\yy_{i_s})
&= -\sum_{0\leq r<d} (-1)^{d-r} e_r(\yy_{i_1},\cdots,\yy_{i_s}) h_{d-r}(u_{\phi_{\lambda}(1)},\cdots,u_{\phi_{\lambda}(s+1-d)}).
\end{align*}
Therefore, the monomial $m$ can be written as 
\begin{align}\label{eq:5.4}
m = \sum_{i} a_i\Phi_i(y)+\sum_{j=1}^{\ell}f_j(y,u)u_j
\end{align}
for some polynomials $f_1(y,u),\cdots,f_{\ell}(y,u)$.
Since $m$ has degree $d$, we can replace the polynomials in the right-hand-side by their homogeneous components of degree $d$. Namely, we can assume that $\deg\Phi_i(y)=\deg f_j(y,u)+1=d$.
Now, the induction assumption shows that each $f_j(y,u)$ is written as a $\mathbb{Z}[u_1,\cdots,u_{\ell}]$-linear combination of $\Phi_1(y),\cdots,\Phi_k(y)$ since 
the degree of each monomial in $y$ contained in $f_j(y,u)$ is less than $d$.
Hence, the element $m$ is written by a $\mathbb{Z}[u_1,\cdots,u_{\ell}]$-linear combination of $\Phi_1(y),\cdots,\Phi_k(y)$ in $\mathbb{Z}[\yy_1,\cdots,\yy_n,u_1,\cdots,u_{\ell}]/\tilde{I}_{\lambda}$, as desired.

\end{proof}
From Lemma~\ref{gene}, the surjection $\bar\psi$ has to be an isomorphism as discussed above.

\end{document}